\documentclass[reqno]{amsart}
\usepackage{amsmath}
\usepackage{amssymb}
\usepackage{xypic}
\usepackage{mathrsfs}
\usepackage{hyperref}
\usepackage{graphicx}
\usepackage{perpage}
\MakePerPage{footnote}
\usepackage{setspace}

\newtheorem{theorem}{Theorem}[section]
\newtheorem{lemma}[theorem]{Lemma}

\theoremstyle{remark}

\title[Existence of definable Whitney stratifications]{A geometric proof of the existence of definable Whitney stratifications}

\author{Nhan Nguyen, Saurabh Trivedi and David Trotman}

\address{Nhan Nguyen $\&$ David Trotman, LATP (UMR 7353), Centre de Math\'ematiques et Informatique, Aix-Marseille Universit\'e, 39 rue Joliot-Curie, 13453 Marseille Cedex 13, France.}

\address{Saurabh Trivedi, Instytut Matemaczny PAN, Division in Krakow, ul. Sw. Tomasza, 30, 31-027 Krakow, Poland.}

\newcommand{\bb}{\mathbb}
\newcommand{\al}{\mathcal}

\begin{document}
\maketitle

\parskip .12cm

\begin{abstract} We give a geometric proof of existence of Whitney stratifications of definable sets in o-minimal structures. 
\end{abstract}

\section{Introduction}

It has been known for a long time that semi-varieties (semi-analytic or semi-algebraic for example) can be stratified into smooth manifolds satisfying Whitney conditions $(a)$ and $(b)$. Methods of doing this can be found in Whitney \cite{Whitney}, Wall \cite{Wall3}, Bochnak, Coste and Roy \cite{BCR}, {\L}ojasiewicz \cite{Lojasiewicz},  {\L}ojasiewicz, Stasica and Wachta \cite{LSW}, etc. All of the proofs given in the above mentioned literature of the existence of such stratifications use analytical techniques.

Kaloshin \cite{Kaloshin2} has claimed a geometric proof of the existence of stratifications of semivarieties satisfying the Whitney conditions. We show by giving a very simple counterexample that there is a gap in this proof of Kaloshin. In this article, motivated by the idea of Kaloshin, we give a geometric proof of the existence of these stratifications in the more general o-minimal setting. Our method  fills the gap in Kaloshin's proof and moreover it works for the case of definable sets in o-minimal structures. Loi \cite{Loi1} also proved this result with a different proof using a wing lemma.

Let us first describe the overview of the idea of Kaloshin:

The following terminology is due to Kaloshin. Let $V \subset \mathbb R^n$ be a closed semivariety and let $\Sigma$ be a stratification of $V$. Given strata $X$ and $Y$ of $\Sigma$ and a point $y \in \overline{X}\cap Y$, by a local connected component of $X$ at $y$ is meant a connected subset of $X$ obtained from intersecting $X$ by a sufficiently small open ball centered at $y$. By a result of {\L}ojasiewicz \cite{Lojasiewicz}, there exist finitely many such connected components for any point $y \in Y$.

A local connected component $X_\alpha$ is said to be an essential component of $X$ at $y$ if $y$ lies in the interior of $Y \cap \overline{X_{\alpha}}$ (considered as a subset of $Y$). Now $Sing_a(X,Y)$ is defined as the set of points $y\in Y$ such that the union of the essential components of $X$ at $y$ is not $(a)$-regular over $Y$ at $y$. Kaloshin proves that the set $Sing_a(X,Y)$ is a semivariety and has dimension less than the dimension of $Y$, so showing that Whitney's condition $(a)$ is generic, and the result follows.

We will show pictorially that the set of $(a)$-faults (points where the condition $a$ fails) of a pair of strata $(X,Y)$ is in general bigger than $Sing_a(X,Y)$, and that considering only the essential components leaves several $(a)$-faults unaccounted for.

Consider the closed subset $V$ of $\bb R^3$ as in Figure \ref{counter}. It is like Santa's hat except that the conical tip is attached to the round edge of the hat.

\begin{figure}[hi]
\begin{center}
\includegraphics[scale=.85]{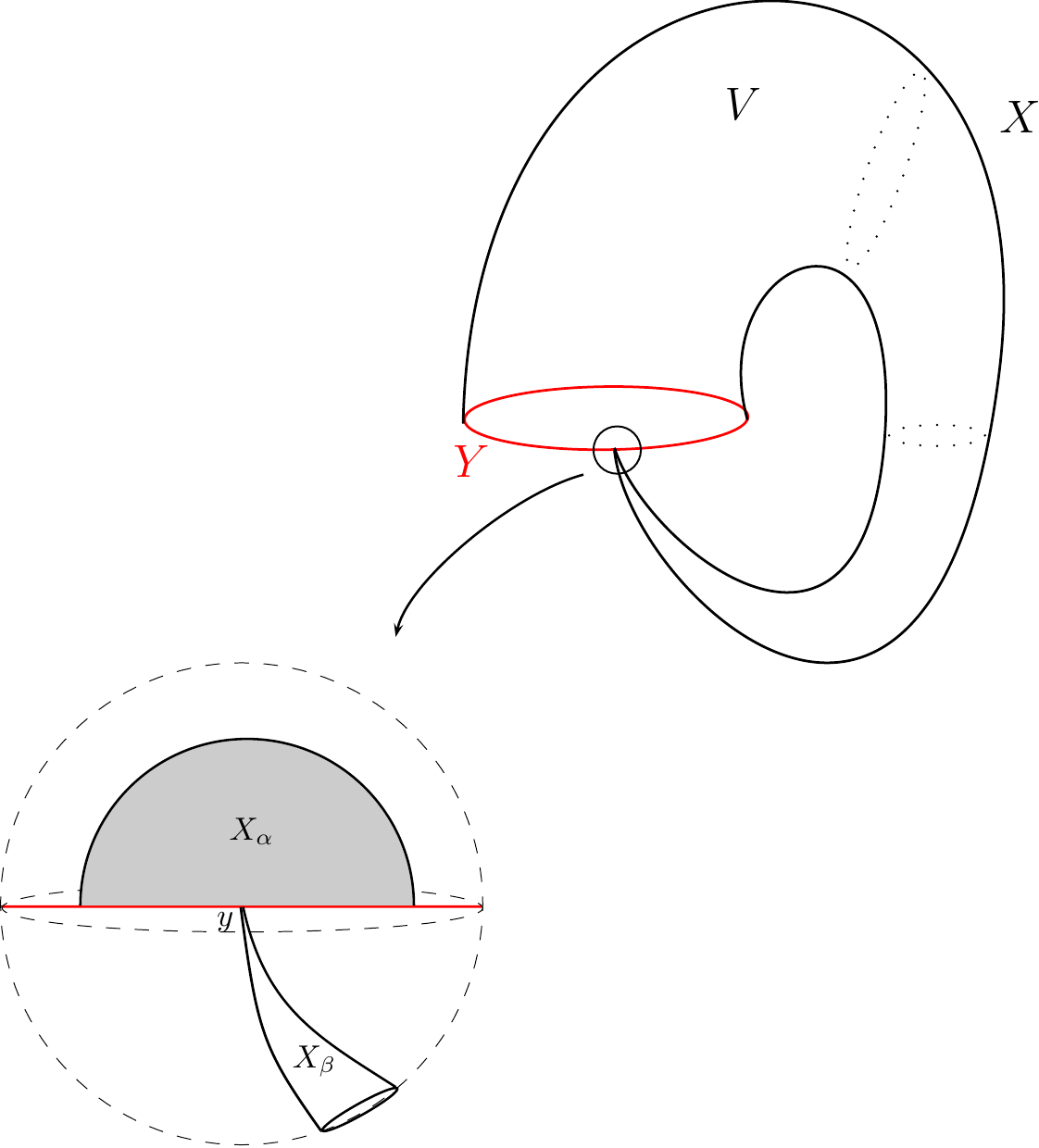}
\caption{}\label{counter}
\end{center}
\end{figure}

Applying the procedure of stratifying $V$ due to Wall \cite{Wall3}\footnote{We must mention here that Wall's method works only for closed semi-varieties.}  we find that $\bb R^ 3$ will have three strata compatible with $V$. The three dimensional stratum will be the complement of $V$ in $\bb R^ 3$. The two dimensional stratum will be $X$ and the one dimensional stratum will be $Y$.

Now, take $y \in Y$ as in the Figure \ref{counter} (the tip of the hat) and intersect $V$ with a small ball around $y$. We find that $X$ has two local connected components at $y$, denoted $X_{\alpha}$ and $X_{\beta}$. Notice that $X_{\alpha}$ is an essential component of $X$ near $Y$ while $X_{\beta}$ is not. Thus the set $Sing_a(X,Y)$ is empty. Notice also that $X$ is not $(a)$-regular over $Y$ at $y$. Thus the set of $(a)$-faults in this stratification of $V$ is strictly bigger than the set $Sing_a(X,Y)$.

We will now summarize the contents of the article. 

In section \ref{sec2} we give definitions of o-minimal structures, definable stratifications, stratifying conditions, Whitney conditions and state the main result (Theorem \ref{thm23}). The idea of the proof is to show that Whitney conditions are stratifying conditions (Lemma \ref{lem25} and \ref{lem26}).

In section \ref{sec3} we define Kuo functions. These functions give criteria to test Whitney conditions $(a)$ and $(b)$ in a stratification.

In section \ref{sec4} we prove that the Whitney conditions $(a)$ and $(b)$ are stratifying conditions. The key to the proof is the existence of a sequence of points in a stratum converging to a point in another stratum in its boundary such that the limit of the sequence of values of the Kuo functions on these points vanish (Lemma \ref{existence_sequence_lem} and \ref{lem_pb}).

\section{Preliminaries and statement of results}\label{sec2}

\subsection{o-minimal structures} A \emph{structure} on the ordered field $(\bb R,+,.)$ is a family $\al D = (D_n)_{n\in \bb N}$ satisfying the following properties:

1. $D_n$ is a boolean algebra of subsets of $\bb R^n$,

2. If $A \in D_n$ then $\bb R \times A \in D_{n+1}$ and $A \times \bb R \in D_{n+1}$,

3. $D_n$ contains the zero sets of all polynomials in $n$ variables,

4. If $A \in D_n$ then its projection onto the first $n-1$ coordinates in $\bb R^{n-1}$ is in $D_{n-1}$.

Such a $\al D$ is said to be \emph{o-minimal} if in addition

5. Any set $A \in D_1$ is a finite union of open intervals and points.

Elements of $D_n$ for any $n$ are called \emph{definable sets} of $\al D$. A map between two definable sets is said to be a \emph{definable map} if its graph is a definable set. 

Let $\al D$ be an o-minimal structure on $\bb R$. In what follows by definable we mean in this $\al D$.

\subsection{Definable stratifications and stratifying conditions}
A definable $C^p$-stratification $\Sigma$ of $\bb R^n$ is a partition of $\bb R^n$ into finitely many definable $C^p$ submanifolds \footnote{A definable $C^p$ submanifold of $\mathbb{R}^n$ meaning a definable subset and also a $C^p$ submanifold of $\mathbb{R}^n$.} of $\bb R^n$, called strata, such that the boundary of every stratum is either empty or a union of some other strata.

Let $\al A = \{A_1, \ldots, A_k\}$ be a family of definable subsets of $\bb R^n$. A stratification $\Sigma$ of $\bb R^n$ is said to be compatible with $\al A$ if each $A_i$ is the union of some strata of $\Sigma$. In the rest of the paper, by definable we mean of class $C^p$.

Let $(X,Y)$ be a pair of definable submanifolds of $\bb R^n$ such that $Y \subset \overline{X} \setminus X$. Let $\gamma$ be a condition on the pair $(X,Y)$ at points in $Y$. A point $y \in Y$ is said to be a $(\gamma)$-fault if the condition $\gamma$ fails to be satisfied for the pair $(X,Y)$ at $y$. We denote by $\al F_{\gamma}(X,Y)$ the set of all $(\gamma)$-faults for the pair $(X,Y)$. If $F_{\gamma}(X,Y)$ is empty then we say that the pair $(X,Y)$ is $(\gamma)$-regular. Moreover, a stratification is said to be $(\gamma)$-regular if every pair of its strata is $(\gamma)$-regular.

A condition $(\gamma)$ is said to be a \emph{stratifying condition} if for any pair $(X,Y)$ as above the set $\al F_{\gamma}(X,Y)$ is definable and $\dim \al F_{\gamma}(X,Y)< \dim Y$. Using cell decomposition theorem \cite{Dries1} and arguments as in the proof of Proposition 2 in \cite{LSW}, we have the following result (see also \cite{Loi3}).

\begin{theorem}\label{thm22} Let $\al A = \{A_1,\ldots, A_k\}$ be a family of definable subsets of $\bb R^n$. If $(\gamma)$ is a stratifying condition then there exists a $(\gamma)$-regular definable stratification of $\bb R^n$ compatible with $\al A$.  
\end{theorem}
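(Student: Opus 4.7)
The plan is to combine the $C^p$ cell decomposition theorem in o-minimal structures \cite{Dries1} with a finite descending induction on dimension, following the strategy of Proposition~2 in \cite{LSW}. First I would apply the cell decomposition theorem to the family $\al A$ to obtain an initial definable $C^p$ stratification $\Sigma_0$ of $\bb R^n$ compatible with $\al A$; the frontier condition is automatic from the cell structure. I then refine $\Sigma_0$ from the top down to eliminate all $(\gamma)$-faults.

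Assume inductively that, after finitely many refinements, we have reached a definable stratification $\Sigma$ of $\bb R^n$ compatible with $\al A$ in which every pair $(X,Y)$ with $\dim Y > d$ is $(\gamma)$-regular. For each $d$-dimensional stratum $Y$ of $\Sigma$, set
\[
F(Y) \ = \ \bigcup_{\substack{X \in \Sigma \\ Y \subset \overline X \setminus X}} \al F_\gamma(X,Y),
\]
which is a finite union; since $(\gamma)$ is a stratifying condition, $F(Y)$ is a definable subset of $Y$ of dimension strictly less than $d$. Let $F$ be the union of the $F(Y)$ over all $d$-strata, and apply the cell decomposition theorem once more, this time demanding compatibility with all the strata of $\Sigma$ together with $F$. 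The resulting stratification $\Sigma'$ refines $\Sigma$; its strata of dimension greater than $d$ may be kept equal to those of $\Sigma$ (they are already cells and $F$ is disjoint from them), while its $d$-dimensional strata are open subsets of the sets $Y \setminus F(Y)$ and therefore contain no $(\gamma)$-fault with respect to any stratum of higher dimension. Letting $d$ descend from $n-1$ to $0$, the procedure terminates in at most $n$ steps and produces the desired $(\gamma)$-regular definable stratification.

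The main obstacle I anticipate is verifying that subdividing a stratum $Y$ into smaller open pieces $Y'_i \subset Y \setminus F(Y)$ does not create new faults with respect to the previously treated higher-dimensional strata. Concretely, for each such $X$ and each $y \in Y'_i$ one needs $\al F_\gamma(X, Y'_i)$ to coincide with $\al F_\gamma(X,Y)\cap Y'_i$ near $y$. This locality is implicit in the geometric stratifying conditions of interest: Whitney $(a)$ and $(b)$ are conditions on germs at $y$ and on $T_y Y$, so they depend only on the germ of $Y$ at $y$ and are insensitive to replacing $Y$ by any open neighborhood of $y$ in $Y$. Once this point is checked, the descending induction closes up and the theorem follows.
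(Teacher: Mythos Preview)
Your proposal is correct and is exactly the approach the paper indicates: the paper does not give a proof of this theorem but simply invokes the cell decomposition theorem \cite{Dries1} together with the argument of Proposition~2 in \cite{LSW}, which is precisely the descending induction on dimension you sketch. Your remark on locality is the only point requiring care, and you handle it correctly for the Whitney conditions; note that the paper's abstract notion of ``stratifying condition'' is really intended for conditions of this local type.
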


\subsection{Whitney conditions} Let $X$ be a definable submanifold of $\mathbb{R}^n$ and $y \in \overline{X}$. A sequence of points $\{x_n\}$ in $X$ converging to $y$ is said to be a \emph{good sequence} if the corresponding sequences $\{T_{x_n}X\}$ of tangent spaces in the Grassmannian converges. The limit $\lim_{n \to \infty} T_{x_n}X$ will be called the \emph{Grassmannian limit} of the sequence $\{x_n\}$. Since the Grassmannian is a compact metric space, for every sequence in $X$ there exists a subsequence which is a good sequence. 

Let $(X,Y)$ be a pair of definable submanifolds of $\mathbb{R}^n$ such that $Y \subset \overline{X}\setminus X$. Consider the following conditions on $(X,Y)$ at a point $y \in Y$.

\begin{enumerate}
\item[(a)] the Grassmannian limit of every good sequence $\{x_n\}$ in $X$ converging to $y$ contains the tangent space $T_y Y$.

\item[(b)] for every sequence $\{y_n\}$ in $Y$ converging to $y$, the Grassmannian limit of every good sequence $\{x_n\}$ in $X$ converging to $y$ contains $v: = \lim_{n \to \infty} \frac{x_n - y_n}{\|x_n - y_n\|}$ if $v$ exists. 
\end{enumerate}

The reader must have realized that the conditions $(a)$ and $(b)$ are the usual Whitney conditions $(a)$ and $(b)$ written differently.

\begin{theorem}\label{thm23} Let $\mathcal{A} = \{A_1, \ldots A_k\}$ be a family of definable subsets of $\mathbb{R}^n$. Then there exists an $(a)$-regular (resp. $(b)$-regular) definable stratification of $\mathbb{R}^n$ compatible with $\mathcal{A}$.
\end{theorem}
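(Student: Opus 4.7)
The plan is to derive Theorem \ref{thm23} from Theorem \ref{thm22}, so it suffices to prove that the Whitney conditions $(a)$ and $(b)$ are each stratifying conditions in the sense of \S\ref{sec2}. Fix a pair $(X,Y)$ of definable $C^p$ submanifolds of $\bb R^n$ with $Y\subset\overline X\setminus X$; we must show that $\al F_{(a)}(X,Y)$ and $\al F_{(b)}(X,Y)$ are definable subsets of $Y$ of dimension strictly less than $\dim Y$.

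Definability is the easier of the two. The Gauss map $\tau_X\colon X\to G(\dim X,n)$, $x\mapsto T_xX$, is definable, and likewise for $Y$. The set of Grassmannian limits of tangent spaces to $X$ at a point $y\in\overline X$ is the fibre over $y$ of the projection to $\overline X$ of $\overline{\operatorname{graph}(\tau_X)}\cap(\overline X\times G(\dim X,n))$, so it depends definably on $y$. Condition $(a)$ at $y$ is then the first-order statement ``every $T$ in this fibre contains $T_yY$'', and condition $(b)$ adds a further quantifier over the closure of the secant-direction set $\{(x,y',(x-y')/\lVert x-y'\rVert):x\in X,\ y'\in Y\}$. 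Both fault sets are therefore definable.

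The real content is the dimension bound, and I would treat $(a)$ first. Suppose for contradiction that $\dim\al F_{(a)}(X,Y)=\dim Y$. By cell decomposition there is a definable $C^p$ open subset $Y_0\subset Y$ contained in $\al F_{(a)}(X,Y)$. Shrinking and working locally about a generic $y_0\in Y_0$, I would introduce the Kuo-type function $\kappa_a$ of \S\ref{sec3} — a definable function on a neighbourhood of $Y_0$ in $X$ whose vanishing along $x_n\to y$ is equivalent to condition $(a)$ being satisfied at $y$ by the good subsequences of $\{x_n\}$. Because $(a)$ fails on every point of $Y_0$, there exist $c>0$ and a definable set $W\subset X$ with $Y_0\subset\overline W$ on which $\kappa_a\ge c$.

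The crux, and the main obstacle, is to contradict this lower bound. This is what Lemmas \ref{existence_sequence_lem} and \ref{lem_pb} are designed to provide: at each $y\in Y_0$ one must produce a sequence in $X\cap\overline W$ converging to $y$ along which the Kuo function tends to $0$. To build such a sequence I would combine definable choice with the curve selection lemma to obtain a definable arc $\gamma\colon(0,\varepsilon)\to X$ with $\gamma(t)\to y$, apply the o-minimal monotonicity theorem to $\kappa_a\circ\gamma$ to reduce to controlling a monotone one-variable function, and then use a geometric analysis of how the tangent planes of $X$ rotate along $\gamma$ — essentially an infinitesimal wing erected over $Y_0$ — to force $\kappa_a(\gamma(t))\to 0$, contradicting $\kappa_a\ge c$ on $W$. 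Hence $\dim\al F_{(a)}(X,Y)<\dim Y$. The argument for $(b)$ is strictly parallel, with $\kappa_a$ replaced by a Kuo function $\kappa_b$ that also records the secant direction $v$: one extra parameter, but no new idea. Once both conditions are known to be stratifying, Theorem \ref{thm22} delivers the required $(a)$- and $(b)$-regular definable stratifications compatible with $\al A$.
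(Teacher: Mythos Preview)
Your overall strategy---reduce Theorem \ref{thm23} to Theorem \ref{thm22} by showing that $(a)$ and $(b)$ are stratifying conditions, handling definability via first-order expressibility---matches the paper, and your definability paragraph is fine. The gap is in how you pass from Lemma \ref{existence_sequence_lem} to the dimension bound. You assume $\al F_a(X,Y)$ contains an open $Y_0\subset Y$, extract a definable $W\subset X$ with $Y_0\subset\overline W$ and $\kappa_a\ge c$ on $W$, and then propose to ``contradict this lower bound'' by producing, via Lemma \ref{existence_sequence_lem}, a sequence in $X\cap\overline W$ with $\kappa_a\to 0$. But these two statements are not in tension: the failure of $(a)$ at $y$ says \emph{some} good sequence has $\kappa_a$ bounded away from $0$, while Lemma \ref{existence_sequence_lem} says \emph{some other} good sequence has $\kappa_a\to 0$. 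Both coexist in $X$, and the second sequence has no reason to lie in $W$ (indeed it cannot, since $\kappa_a\ge c$ there by continuity). So no contradiction is reached, and your argument stalls.

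The paper uses Lemma \ref{existence_sequence_lem} quite differently. At an essential $(a)$-fault $y$ one has, inside a \emph{single} connected local component $X_i(y)$, both a sequence with $p_a\to 0$ (from the lemma) and one with $p_a\to\epsilon''>0$ (from the fault). Connectedness plus Sard's theorem then force the codimension-one level set $X^\epsilon=p_a^{-1}(\epsilon)$, for a regular value $\epsilon\in(0,\epsilon'')$, to accumulate on $y$. Covering $\al F_a(X,Y)$ by countably many $\overline{X^{\epsilon_\nu}}\cap Y$, one uses the inequality $p_a^{\epsilon_\nu}\ge p_a|_{X^{\epsilon_\nu}}$ (tangent spaces of the level set are smaller, so the normal projection is larger) to see that each point of $Y^{\epsilon_\nu}$ is again an $(a)$-fault for the lower-dimensional pair $(X^{\epsilon_\nu},Y^{\epsilon_\nu})$, and finishes by induction on $\dim X$. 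This level-set-and-induction mechanism is the missing ingredient in your sketch. As a side remark, your ``infinitesimal wing'' outline for Lemma \ref{existence_sequence_lem} itself is closer to Loi's approach than to the paper's $\epsilon$-flat stratification and cone argument, but that is a matter of method rather than a gap.
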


By Theorem \ref{thm22}, to prove Theorem \ref{thm23}, it suffices to show that conditions $(a)$ and $(b)$ are stratifying conditions. For any definable submanifolds $X,Y \subset \bb R^n$ such that $Y \subset \overline{X} \setminus X$, it is easy to see that the set of $(a)-$faults $\mathcal{F}_a(X, Y)$ (resp.  $(b)$-faults $\mathcal{F}_b(X,Y)$) is definable once we write it using quantifiers, see for example \cite{Loi3}. Thus, we need to prove the following lemmas:

\begin{lemma}\label{lem25} For any definable submanifolds $X,Y \subset \bb R^n$ such that $Y \subset \overline{X} \setminus X$, we have $\dim \mathcal{F}_a(X, Y) < \dim Y$.
\end{lemma}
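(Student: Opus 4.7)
The plan is to argue by contradiction, combining o-minimal cell decomposition with the Kuo-function machinery to be developed in Sections \ref{sec3} and \ref{sec4}. Suppose for contradiction that $\dim \mathcal{F}_a(X,Y) = \dim Y$. Since $\mathcal{F}_a(X,Y)$ is a definable subset of $Y$ (as the authors have already remarked), the cell decomposition theorem furnishes a definable open subset $U \subset Y$ contained in $\mathcal{F}_a(X,Y)$; every point of $U$ is then an $(a)$-fault of the pair $(X,Y)$.

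Next, I would invoke the Kuo function $K_a$ from Section \ref{sec3}. This is to be a definable non-negative function on $X$ (depending on $Y$ through a suitable tubular retraction) whose values quantify the failure of condition $(a)$: the pair $(X,Y)$ is $(a)$-regular at $y \in Y$ precisely when $K_a(x) \to 0$ as $x \to y$ in $X$ along every good sequence. Consequently, for each $y \in U$ some good sequence $\{x_n\} \subset X$ has $\limsup_n K_a(x_n) > 0$. Applying o-minimal monotonicity to the definable function $y \mapsto \limsup_{x \to y, \, x \in X} K_a(x)$ and shrinking $U$, I may arrange a uniform lower bound: there exists $\delta > 0$ with $U \subset \overline{\{x \in X : K_a(x) \geq \delta\}}$.

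Finally, I would invoke Lemma \ref{existence_sequence_lem} to produce, at some $y_0 \in U$, a good sequence $\{x_n\} \subset X$ converging to $y_0$ for which $K_a(x_n) \to 0$. This contradicts the uniform lower bound just established (the sequence must eventually leave the set $\{K_a \geq \delta\}$, yet $y_0$ lies in its closure by construction, so one can extract a subsequence inside $\{K_a \geq \delta\}$ accumulating at $y_0$ and compare the two). The contradiction forces $\dim \mathcal{F}_a(X,Y) < \dim Y$.

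The main obstacle does not lie in this outline but rather in the construction of the Kuo function $K_a$ (so that its vanishing along every good sequence genuinely characterizes $(a)$-regularity and it is both definable and continuous) and in the proof of Lemma \ref{existence_sequence_lem}, which is the geometric heart of the paper. The tameness provided by the o-minimal structure is essential throughout: definability of $K_a$ rules out oscillatory behavior and allows one to pass from a limsup to a uniform lower bound on a cell, while curve-selection and cell-decomposition arguments are needed to extract the vanishing-Kuo sequence from the obstruction that a top-dimensional $(a)$-fault locus would represent.
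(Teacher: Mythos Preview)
Your outline has a genuine gap at the very last step: the ``contradiction'' you claim does not materialise. Knowing that $y_0\in U\subset\overline{\{x\in X:K_a(x)\ge\delta\}}$ only tells you that \emph{some} sequence in $X$ approaching $y_0$ has $K_a$-values $\ge\delta$; it says nothing about \emph{every} sequence. Lemma~\ref{existence_sequence_lem} then hands you a \emph{different} good sequence with $K_a(x_n)\to 0$. These two facts are perfectly compatible --- indeed, their coexistence is exactly what characterises an $(a)$-fault once Lemma~\ref{existence_sequence_lem} is available. Put differently, the function $y\mapsto\limsup_{x\to y}K_a(x)$ being bounded below on $U$ is just a restatement of ``every point of $U$ is an $(a)$-fault''; it does not preclude $\liminf_{x\to y}K_a(x)=0$, and Lemma~\ref{existence_sequence_lem} guarantees precisely that this liminf is always $0$. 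So no contradiction arises, and the argument stalls.

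What the paper actually does with the simultaneous existence of a ``bad'' sequence ($p_a\to\epsilon''>0$) and a ``good'' sequence ($p_a\to 0$) at each fault $y$ is to exploit it via an intermediate-value and dimension-reduction scheme. After restricting to the dense open set of \emph{essential} points of $Y$ (so that local connected components of $X$ near $y$ have $y$ in the interior of their closure in $Y$), one picks a Sard-regular value $\epsilon$ between $0$ and $\epsilon''$; connectivity of the essential component lets one produce points of the level set $X^\epsilon:=p_a^{-1}(\epsilon)$ arbitrarily close to $y$, so $y\in\overline{X^\epsilon}\setminus X^\epsilon$. Covering $[0,k]$ by countably many such regular values traps $\mathcal{F}_a(X,Y)$, up to a small set, inside $\bigcup_\nu\mathcal{F}_a(X^{\epsilon_\nu},Y^{\epsilon_\nu})$; since $\dim X^{\epsilon_\nu}=\dim X-1$ and the new Kuo function dominates the old one, induction on $\dim X$ finishes the proof. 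This inductive level-set mechanism is the missing idea in your proposal.
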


\begin{lemma}\label{lem26} For any definable submanifolds $X,Y \subset \bb R^n$ such that $Y \subset \overline{X} \setminus X$, we have $\dim \mathcal{F}_b(X, Y) < \dim Y$.
\end{lemma}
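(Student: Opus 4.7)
My plan is to mirror the strategy that proves Lemma \ref{lem25}, replacing the Kuo function and existence lemma for condition $(a)$ with their counterparts for condition $(b)$, and to argue by contradiction.

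Suppose for contradiction that $\dim \mathcal{F}_b(X,Y) = \dim Y =: d$. Since $\mathcal{F}_b(X,Y)$ is a definable subset of $Y$, cell decomposition (\cite{Dries1}) yields a definable $C^p$ submanifold $W \subseteq \mathcal{F}_b(X,Y)$ of dimension $d$, hence open in $Y$. Fix $y_0 \in W$; after replacing $X$ and $Y$ by small definable neighborhoods of $y_0$ and shrinking, I may assume every point of $Y$ is a $(b)$-fault of $(X,Y)$.

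Next, I would encode condition $(b)$ via the Kuo function $\kappa_b$ introduced in Section \ref{sec3}: roughly, $\kappa_b(x,y)$ measures the angle between the secant direction $\frac{x-y}{\|x-y\|}$ and the tangent space $T_xX$, so that $(b)$ holds at $y$ iff $\kappa_b(x_k, y_k) \to 0$ for all pairs of sequences $x_k \in X \to y$, $y_k \in Y \to y$ with $x_k \neq y_k$. Being a $(b)$-fault at $y$ is therefore equivalent to the existence of such sequences along which $\liminf_k \kappa_b(x_k, y_k) > 0$. I would then invoke Lemma \ref{lem_pb}, which is designed precisely to produce, for a suitably generic $y_0 \in Y$, pairs of sequences $x_k \in X$ and $y_k \in Y$ both converging to $y_0$ with $\kappa_b(x_k, y_k) \to 0$. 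This flatly contradicts the $(b)$-fault status of $y_0$ enforced above, forcing $\dim \mathcal{F}_b(X,Y) < \dim Y$ and proving the lemma.

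The main obstacle is Lemma \ref{lem_pb} itself, which is the geometric substitute for the classical wing-lemma or Kuo-inequality arguments used in the analytic proofs cited in the introduction. Following Kaloshin's spirit but avoiding the gap exposed by the ``Santa's hat'' counterexample, I expect the construction to locate $x_k$ as critical points of an appropriate family of distance-type functions on $X$ parametrised by points $y \in Y$, extract the companion points $y_k$ via the o-minimal curve selection lemma, and control the asymptotic vanishing of $\kappa_b$ through o-minimal cell decomposition. The crucial subtlety — and the reason this is not a direct rewrite of Kaloshin's argument — is that the sequences must be allowed to emanate from \emph{every} local connected component of $X$ at each point of $Y$, not merely the essential ones; ensuring definability and convergence of $\kappa_b$ uniformly over non-essential components is where the real work lies.
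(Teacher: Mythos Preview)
Your argument has a genuine logical gap at the step where you claim Lemma \ref{lem_pb} ``flatly contradicts'' the $(b)$-fault status of $y_0$. Being a $(b)$-fault at $y_0$ means there \emph{exists} a sequence along which the Kuo quantity does \emph{not} tend to zero; Lemma \ref{lem_pb} only furnishes \emph{one} sequence along which it \emph{does} tend to zero. These two statements are perfectly compatible: a fault point can, and typically does, admit both ``good'' and ``bad'' approaching sequences. No contradiction arises, and the argument collapses at that point. (A secondary issue: the paper's Kuo function $p_b$ is a function of $x$ alone, with the companion point in $Y$ fixed as the orthogonal projection $\pi_Y(x)$, not an independent second argument.)

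The paper's actual proof exploits precisely this coexistence of good and bad sequences. At an essential $(b)$-fault $y$ one finds, in the \emph{same} local connected component of $X$, a sequence $\{x_n'\}$ with $p_b(x_n')\to 0$ (from Lemma \ref{lem_pb}) and a sequence $\{x_n''\}$ with $p_b(x_n'')\to \epsilon''>0$ (from the fault). Connectedness lets one join $x_n'$ to $x_n''$ by paths along which $p_b$ must cross any intermediate regular value $\epsilon$, so $y\in\overline{X^\epsilon}\setminus X^\epsilon$ with $X^\epsilon=p_b^{-1}(\epsilon)$ of codimension one in $X$. Covering $[0,k]$ by countably many regular values and iterating the whole construction on the pairs $(X^{\epsilon_\nu},Y^{\epsilon_\nu})$ drives $\dim X^{\epsilon_\nu}$ down until it reaches $\dim Y$, trapping $\mathcal{F}_b(X,Y)$ in a countable union of definable sets of dimension $<\dim Y$. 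This Sard/level-set/induction machinery---copied verbatim from the proof of Lemma \ref{lem25} with $p_b$ in place of $p_a$---is the missing substantive step; Lemma \ref{lem_pb} is not a contradiction device but one half of an intermediate-value trap.
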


We will prove Lemma \ref{lem25} and \ref{lem26} in section \ref{sec4}.

\section{Kuo functions}\label{sec3}

Let $X, Y$ be definable submanifolds of $\bb R^n$ such that $Y \subset \overline{X} \setminus X$. Suppose that $\dim Y = k$. Since $(a)$ (resp. $(b)$) regularity is a local property we can assume that locally $Y$ is a $k$-plane with a basis of unit vectors $\{e_1, \ldots, e_k\}$.

Given a linear subspace $L$ of $\mathbb{R}^n$ we denote by $\pi_L : \mathbb{R}^n \rightarrow L$ the canonical orthogonal projection of $\mathbb{R}^n$ onto $L$. Let $x \in X$ and consider $T_xX$ as a linear subspace of $\bb R^n$. Using the idea of Kuo \cite{Kuo2} (see also \cite{Kaloshin2}) we define functions, which we call Kuo functions, that give criteria to test $(a)$ and $(b)$-regularity.

Let $p_{a} : X \rightarrow \mathbb{R}$ be the function defined by
$$p_{a}(x): = \sum_{i=1}^k\|\pi_{N_xX}(e_i)\|^2$$
where $N_xX$ is the orthogonal complement of $T_xX$.

Let $p_{b'} : X \rightarrow \mathbb{R}$ be the function defined by
$$p_{b'}(x): = \|\pi_{N_xX}(p(x))\|^2,$$ where $\displaystyle p(x): = \frac{x - \pi_{Y}(x)}{\|x-\pi_{
Y}(x)\|}$.

Let $p_b:X \rightarrow \mathbb{R}$ be the function defined by
$$p_b(x): = p_{a}(x) + p_{b'} (x).$$

Kuo \cite{Kuo2} (see also \cite{Kaloshin2}) proved that a pair $(X, Y)$ satisfies the condition $(a)$ (resp. $(b)$) at $y \in Y$ if and only if for every good sequence $\{x_n\}$ in $X$ converging to $y$ , $\lim_{n\to \infty}p_a(x_n) = 0$ (resp. $\lim_{n\to \infty}p_b(x_n) = 0$).  

\section{Existence of Whitney stratifications for definable sets\label{sec4}}

Let $P$ and $Q$ be linear subspaces of $\mathbb{R}^n$. The \emph{angle} between $P$ and $Q$ is defined by
$$\delta(P, Q): = \sup_{\lambda \in P, \| \lambda \| = 1} \{ \| \lambda - \pi_Q(\lambda) \|\}.$$

The function $\delta$ takes values in $[0,1]$. In general $\delta$ is not symmetric, for instance, if $P \subset Q$ then $\delta(P, Q) = 0$ while $\delta(Q, P) = 1$. The following properties are easy to verify. 

1. If $\dim P  = \dim Q$ then $\delta(P,Q) = \delta(Q, P)$. 

2. If $P \subset Q$ then $d(P, Q) = 0$. 

3. If $\dim T \leq \dim P \leq \dim Q$ then $\delta(T, Q) \leq \delta(T, P) + \delta(P, Q)$.

For a real number $\epsilon >0$, a definable submanifold $X$ is said to be $\epsilon$-flat if for every $x, x'$ in $X$,  $\delta (T_xX, T_{x'}X) < \epsilon$. If $\dim X = 0$ then we assume that $X$ is $\epsilon$-flat for every $\epsilon >0$.

\begin{lemma}\label{lemma_flat}
 Let $X \subset \mathbb{R}^n$ be a definable set of dimension $k < n$ and let $\epsilon>0$ be a real number. There is a definable stratification of $X$ such that every stratum is $\epsilon$-flat.
\end{lemma}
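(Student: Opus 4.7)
The plan is to induct on $k = \dim X$. The base case $k=0$ is immediate, since $X$ is then a finite set of points and each singleton is $\epsilon$-flat by convention. For the inductive step, I would actually prove a slightly stronger statement, allowing an additional prescribed finite family of definable subsets of $X$ that the stratification must be compatible with; this strengthening propagates through the induction without extra cost, and is needed to handle the frontier condition cleanly.

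Fix $\epsilon > 0$ and assume $\dim X = k$ together with the strengthened statement in dimensions $< k$. I would first apply the definable cell decomposition theorem to stratify $X$ into definable $C^p$ submanifolds, writing $M_1,\ldots,M_r$ for its $k$-dimensional strata. On each $M_i$ the Gauss map $g_i \colon M_i \to G_k(\bb R^n)$, $x \mapsto T_xM_i$, is definable and (for $p \geq 1$) continuous. Since the Grassmannian is compact in the metric $\delta$, I would cover $G_k(\bb R^n)$ by finitely many open $\delta$-balls of radius $\epsilon/2$ and refine them to a finite definable partition $V_1,\ldots,V_N$ of $\delta$-diameter less than $\epsilon$. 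Pulling these back gives a finite definable partition of $M_1\cup\cdots\cup M_r$ with each piece $\epsilon$-flat. I would then invoke cell decomposition once more to obtain a definable $C^p$ stratification $\Sigma$ of $X$ compatible with all the pullback pieces $g_i^{-1}(V_j)$ (and with the prescribed family). Each $k$-dimensional stratum of $\Sigma$ is an open submanifold of some $M_i$ contained in a single $g_i^{-1}(V_j)$, hence $\epsilon$-flat.

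The union $Y$ of the strata of $\Sigma$ of dimension $< k$ is a definable set of dimension $< k$. I would apply the strengthened inductive hypothesis to $Y$ with compatibility imposed against the finite definable family $\{Y\cap\overline{S}:S\in\Sigma,\ \dim S=k\}$ (together with the traces on $Y$ of the originally prescribed family). This yields an $\epsilon$-flat stratification of $Y$ whose strata refine each $Y\cap\overline{S}$, so that the frontier condition between a $k$-dimensional stratum of $\Sigma$ and the strata inside $Y$ is automatic, while the frontier condition within $Y$ comes from the induction. Combining the $k$-dimensional strata of $\Sigma$ with the inductively produced stratification of $Y$ gives the desired $\epsilon$-flat definable stratification of $X$.

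The main obstacle is not the Gauss-map construction itself, which is the geometric heart of the argument, but rather preserving the frontier condition throughout. A naive refinement of an $\epsilon$-flat stratification is not automatically $\epsilon$-flat, because a lower-dimensional piece carved out of a flat stratum has tangent spaces of smaller dimension that need not be close; this is precisely why the Gauss-map partition is pulled back only to the top-dimensional strata and why the lower-dimensional stratification is produced inductively rather than by refinement. Carrying out the induction on the compatibility-strengthened version of the lemma is what glues these two ingredients together into a single stratification satisfying the paper's definition.
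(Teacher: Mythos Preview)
Your argument is correct and follows the standard Gauss-map route; the paper itself does not give a proof but merely cites Proposition~5 of Kurdyka for the subanalytic case and asserts that the same idea carries over to the o-minimal setting. What you have written is essentially a spelled-out version of that cited argument, with the additional care of carrying a compatibility clause through the induction so that the frontier condition is preserved when the top-dimensional $\epsilon$-flat pieces are glued to the inductively produced stratification of the lower-dimensional part.
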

\begin{proof} This is proved for subanalytic sets in Proposition 5 in Kurdyka \cite{Kurdyka1}, but the idea also works for definable sets.  
\end{proof}

\begin{lemma}\label{existence_sequence_lem}
Let $X, Y$ be definable submanifolds of $\mathbb{R}^n$ such that $Y \subset \overline{X}\setminus X$ and let $y$ be a point in $Y$. Then there exists a good sequence $\{x_n\}$ in $X$ converging to $y$ such that $p_a(x_n)$ converges to $0$.
\end{lemma}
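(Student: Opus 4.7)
The plan is to argue by contradiction: I suppose there is no good sequence $\{x_n\}$ in $X$ converging to $y$ with $p_a(x_n) \to 0$. Then by definability and compactness of the Grassmannian, there must exist $\epsilon_0, r > 0$ with $p_a(x) \geq \epsilon_0$ for every $x \in X \cap B(y, r)$. I will fix a small parameter $\delta > 0$ (to be chosen depending on $\epsilon_0$) and apply Lemma \ref{lemma_flat} to stratify $X \cap B(y, r)$ into finitely many $\delta$-flat definable strata $S^1, \dots, S^m$, the goal being to exhibit a point at which $p_a$ is $O(\delta^2)$, contradicting the lower bound $\epsilon_0$.

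The key combinatorial step is to extract one stratum $S = S^j$ with the following two properties: (i) $y \in \overline{S}$, and (ii) $\overline{S} \cap Y$ contains an open subset $U$ of $Y$ such that $y \in \overline{U}$. Since $Y \subset \overline{X} = \bigcup_i \overline{S^i}$, a sufficiently small neighborhood of $y$ in $Y$ is covered by those $\overline{S^i}$ with $y \in \overline{S^i}$ (the remaining strata are bounded away from $y$). The union of those whose $Y$-trace has dimension strictly less than $k$ has dimension less than $k$ and cannot account for all of $Y \cap B(y, r')$, so a dimension count forces the existence of some $j$ with $\dim(\overline{S^j} \cap Y) = k$ near $y$; the density in $Y \cap B(y, r')$ of the full-dimensional contributions then places $y$ in the closure of the open $k$-dimensional piece $U \subset \overline{S^j} \cap Y$.

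On the stratum $S$ I fix a reference tangent plane $T = T_{x_0}S$; by $\delta$-flatness, $\delta(T_xS, T) < \delta$ for every $x \in S$, so locally $S$ is the graph of a function $f$ over a piece of $T$ with $\|df\| = O(\delta)$. A direct estimate from this graph representation yields $\|\pi_{T^\perp}(\xi - y_0)\| \leq O(\delta)\|\xi - y_0\|$ for every $\xi \in S$ and every $y_0 \in \overline{S}$ sufficiently close to $\xi$. Taking $y_0 \in U$ near $y$ and letting $\xi \to y_0$ along arbitrary directions shows that the tangent cone of $\overline{S}$ at $y_0$ lies in the $O(\delta)$-neighborhood of $T$ in the unit sphere. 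Since $U$ is an open submanifold of $Y$, we have $T_{y_0}U = T_yY$ contained in this tangent cone, so every basis vector $e_i$ of $T_yY$ satisfies $\|\pi_{T^\perp}(e_i)\| = O(\delta)$.

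Combining $\|\pi_{T^\perp}(e_i)\| = O(\delta)$ with the $\delta$-closeness of $T_xS$ to $T$, a routine Lipschitz estimate on the map $L \mapsto \sum_i \|\pi_{L^\perp}(e_i)\|^2$ gives $p_a(x) \leq C\delta^2$ uniformly for $x \in S$, with $C$ independent of $\delta$. Choosing $\delta < \sqrt{\epsilon_0/C}$ contradicts $p_a \geq \epsilon_0$ on $X \cap B(y, r) \supset S$, so the initial assumption fails. For every $n$ there is then $x_n \in X \cap B(y, 1/n)$ with $p_a(x_n) < 1/n$, and compactness of the Grassmannian allows me to pass to a good subsequence, which is the desired sequence. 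The hard part will be the combinatorial/dimension extraction of a $\delta$-flat stratum $S$ whose closure meets $Y$ in a $k$-dimensional set having $y$ as an accumulation point; once $S$ is in hand, what remains is bookkeeping of the $\delta$-flat estimates and the graph representation.
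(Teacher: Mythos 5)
Your overall architecture is the same as the paper's: stratify $X$ near $y$ into $\delta$-flat definable pieces, use a dimension count to find a stratum $S$ whose closure meets $Y$ in a set with nonempty interior accumulating at $y$, and then compare $T_yY$ with the almost-constant tangent plane of $S$ to force $p_a$ to be $O(\delta^2)$ on $S$. Your extraction of $S$ is correct (it is the paper's set $Y'=\bigcup_i \mathrm{Int}_Y(\overline{X_i}\cap Y)$, open and dense in $Y$), and making the estimate uniform over $U$ even lets you avoid the paper's separate diagonal argument for the case $y\notin Y'$.

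However, there is a genuine gap at the decisive step, namely the claim that $\delta$-flatness of $S$ gives the secant estimate $\|\pi_{T^\perp}(\xi-y_0)\|\le O(\delta)\|\xi-y_0\|$ near a point $y_0\in\overline{S}\setminus S$, hence that the tangent cone of $\overline{S}$ at $y_0$ lies $O(\delta)$-close to $T$. The hypothesis $\delta(T_xS,T_{x'}S)<\delta$ controls \emph{tangent planes}, not \emph{secant directions}: locally $S$ is indeed a graph over $T$ with small differential, but near a boundary point $y_0$ the set $S$ may consist of several sheets, or of a graph over a non-quasi-convex domain, and then the mean value estimate $\|f(v)-f(v')\|\le O(\delta)\|v-v'\|$ is unavailable because the segment $[v,v']$ leaves the domain. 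Concretely, an infinitely spiraling flat ramp (a connected $C^1$ surface all of whose tangent planes are within $\delta$ of a horizontal plane $T$, accumulating on a horizontal edge) has a \emph{vertical} limit of secants at boundary points, so the statement you assert is simply false for general $\delta$-flat submanifolds; it is rescued only by o-minimality, which forbids such spiraling. The correct passage from limits of tangents to limits of secants is exactly where the paper invokes the cone $C_y$ around a line $l\subset T_yY$, the curve selection lemma, and the fact that a definable $C^1$ arc ending at a point has equal limits of secants and tangents there. So your proof is not complete as written: you must either insert that curve-selection argument (as the paper does), or strengthen Lemma \ref{lemma_flat} to produce strata that are graphs with $O(\delta)$-Lipschitz constant over quasi-convex domains, which is a substantially stronger (though true, in the style of $L$-regular decompositions) statement than $\epsilon$-flatness.
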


\begin{proof} Suppose on the contrary that there is an $\epsilon > 0$ such that for every good sequence $\{x_n\}$ in $X$ converging to $y$, the limit of the sequence $p_a(x_n)$ is greater than $\epsilon$. In other words, we can choose $\epsilon$ sufficient small such that for any given good sequence $\{x_n\}$ with the Grassmannian limit $\tau$, we have $\delta(T_yY, \tau) > \epsilon$.

Take a stratification of $\mathbb{R}^n$ compatible with $X$ such that its strata are $\frac{\epsilon}{4}$-flat (this is possible by Lemma \ref{lemma_flat}). We can write $X = \bigsqcup_{i=1}^m X_i$ where the $X_i$'s are the strata. Set $Y'  := \bigcup_{i=1}^m Int_Y (\overline{X}_i \cap Y)$. Notice that $Y'$ is open and dense in $Y$. The proof now breaks into the two following cases.

\textit{\underline{Case 1:}} $y \in Y'$.

There is an $X_i$, $1\leq i \leq m$, such that $y \in Int_Y (\overline{X}_i \cap Y)$. Fix a good sequence $\{x_n\}$ in $X_i$ and denote by $\tau$ its Grassmannian limit. 

Since $\delta(T_yY, \tau) > \epsilon$, we can choose a line $l \subset T_yY$ satisfying $\delta( l, \tau) > \frac{\epsilon}{2}$. We define the $\frac{\epsilon}{4}$-cone around $l$ centered at $y$ as follows:  
$$C_y : = \{x\in \mathbb{R}^n: \delta (\mu( x - y),  l ) < \frac{\epsilon}{4}\},$$ 
where $\mu(x-y)$ denotes the line spanned by the unit vector  $\frac{x-y}{\|x-y\|}$.

Since  $y \in Int_Y(\overline{X_i} \cap Y)$, the intersection $X_i(y): = X_i \cap C_y$ is a non-empty  definable set and $y \in \overline{X_i(y)}$. The curve selection lemma (see van den Dries \cite{Dries1}) says that there is a $C^1$ curve $\gamma: (0,1) \to X_i(y)$ such that $\lim_{t\to 0} \gamma(t) = y$. Choose a good sequence $\{x'_n\}$ along the curve $\gamma$ converging to $y$ and denote by $\tau'$ its Grassmannian limit. Put $l': = \lim_{n \to \infty} T_{x'_n}\gamma$, then $l' \subset \tau'$ and
$$\delta(l, \tau') \leq \delta(l, l') \leq \frac{\epsilon}{4} .$$

Since $X_i$ is $\frac{\epsilon}{4}$-flat, $\delta(\tau, \tau') < \frac{\epsilon}{4}$. Thus,
$$\delta(l, \tau) \leq \delta(l, \tau') + \delta(\tau', \tau) < \frac{\epsilon}{4} + \frac{\epsilon}{4} = \frac{\epsilon}{2},$$
a contradiction.

\textit{\underline{Case 2:}} $y \not\in Y'$.

Because $Y'$ is dense in $Y$ we can find a sequence $\{y_n\}$ in $Y'$ tending to $y$. By case 1, for each $y_n$ there is a good sequence $\{x_{n,m}\}$ in $X$ converging to $y_n$ such that $p_a(x_{n,m})$ converges to $0$. It is possible to choose a good sequence $\{x'_n\}$ in $X$ converging to $y$ such that  $x'_n \in \{x_{n,m}\}$ and $p_a(x'_n) < \epsilon$. The limit of the sequence $p_a(x'_n)$  is clearly less than  $\epsilon$. This provides a contradiction.

\end{proof}

To prove Lemma \ref{lem25} we need the following definitions. For $y \in Y$, denote by $B_r(y)$ the open ball in $\mathbb{R}^n$ of radius $r$ centered at $y$. By Hardt's theorem about topological triviality for definable sets (Theorem 5.19, page 60 in \cite{Coste1}), the topological type of the intersection $B_r(y)\cap X$ is stable, i.e. there is an $r> 0$ sufficiently small such that for every $0< r'< r$ the sets $B_r(y)\cap X$ and $B_{r'}(y)\cap X$ are topologically equivalent. Denote by $N_y$ the number of connected components of the intersection $B_r(y)\cap X$. This number is uniformly bounded on $Y$. More precisely, there exists an integer $\kappa$ such that $N_y \leq \kappa$ for all $y \in Y$. A connected component $X_i(y)$ ($i=1,\ldots,N_y$) of the intersection $B_r(y)\cap X$ is said to be \emph{essential} if $y$ is in the interior of $\overline{X_i(y)}\cap Y$ in $Y$, denoted by $Int_Y(\overline{X_i(y)}\cap Y)$, $(i = 1,\ldots, N_y)$. We say that $y$ is an \emph{essential point} if $X_i(y)$ is essential for all $i$. 

%===================================
 Observe that every point in $\displaystyle \bigcap_i^{N_y} Int_Y (\overline{X_i(y)} \cap Y)$ is essential. Set $T_j (X,Y): = \{y\in Y: N_y = j\}$. Then the set of essential points can be written as follows
$$\Omega(X,Y): = \bigcup_{j= 1} ^ \kappa \{ y \in T_j(X,Y):  y \in \bigcap_{i \leq j} Int_Y (\overline{X_i(y)} \cap Y) \}.$$ This implies that $\Omega(X,Y)$ is an open definable set in $Y$.  In addition, we can cover $Y$ by countably many balls $B_{r_\alpha}(y_\alpha)$ where $y_\alpha \in Y \cap ( \{0\}^{n-k} \times \mathbb{Q}^k)$ and $r_\alpha \in \mathbb{Q}$ such that the intersection $B_{r_\alpha}(y_\alpha) \cap X$ is stable. It is clear that the set of non-essential points has dimension less than the dimension of $Y$ since it is contained in the countable union of boundaries of $\overline{X_i(y_\alpha)}\cap Y$ in $Y$  for all $y_\alpha$ and all $i = 1,\ldots, N_{y_\alpha}$. The set $\Omega(X,Y)$ thus is a definable set open and dense in $Y$. 

%The set of non-essential points has to be in the union of boundaries of $\overline{X_i(y_\alpha)}\cap Y$ in $Y$  for all $y_\alpha$ and all $i = 1,\ldots, N_{y_\alpha}$. It is the union of countably many sets of dimensions less than the dimension of $Y$, so its dimension is less than the dimension of $Y$. The set $\Omega(X,Y)$ thus is a definable set open and dense in $Y$. 
%====================================

\begin{proof}[Proof of Lemma \ref{lem25}] Since the set of essential points in $Y$ is definable, dense and open in $Y$, we can assume without loss of generality that every point in $Y$ is essential.

 Take a point $y$ in $\mathcal{F}_a(X,Y)$. By Lemma \ref{existence_sequence_lem}, there is an essential component $X_i(y)$ with two sequences of points $\{x'_n\}$ and $\{x''_n\}$ converging to $y$ such that  $p_a(x_n) \to \epsilon'$ and $p_a(x'_n) \to \epsilon''$ for some non-negative numbers $\epsilon' < \epsilon''$. Notice that the function $p_a(x)$ takes values in $[0, k]$ where $k$ is the dimension of $Y$. By Sard's lemma there exists a regular value $\epsilon \in (\epsilon', \epsilon '')$ of the function $p_a$, so the set $X^{\epsilon}:= (p_a)^{-1} (\epsilon)$ is a definable submanifold of $X$ of codimension $1$ in $X$. Since $X_i(y)$ is locally connected at $y$, $x_n'$ and $x_n ''$ can be connected by a curve $\gamma_n$. Choosing points $x_n \in \gamma_n$ such that $p_a(x_n) = \epsilon$, we get a sequence $\{x_n\} \subset X^\epsilon$ converging to $y$, and hence $y \in \overline{X^\epsilon}\setminus X^\epsilon$.

Now choose countably many regular values $\{\epsilon_\nu\}_{\nu \in \mathbb{Z}}$ of the function $p_a$ whose union is dense in $[0,k]$ and define $X^{\epsilon_\nu} :=( p_a)^{-1}(\epsilon_\nu)$. Then the union $\bigcup_{\nu \in \mathbb{Z}} \overline{X^{\epsilon_\nu}}\setminus X^{\epsilon_\nu}$ contains all $(a)$-faults of the pair $(X,Y)$.

Put $I =: \{ \nu \in \mathbb{Z}:\dim \overline{X^{\epsilon_\nu}} \cap Y = \dim Y\}$. For $\nu \in I$, denote by $Y^{\epsilon_\nu} = Int_Y(\overline{X^{\epsilon_\nu}} \cap Y)$, then $(X^{\epsilon_\nu}, Y^{\epsilon_\nu})$ is again a pair of definable submanifolds with $Y^{\epsilon_\nu} \subset \overline{X^{\epsilon_\nu}}\setminus X^{\epsilon_\nu}$. 
Let $p_a^{\epsilon_\nu}$ be the Kuo function on $(X^{\epsilon_\nu}, Y^{\epsilon_\nu})$ constructed as in section 3. Observe that $p_{a}^{\epsilon_\nu}(x) \geq p_a(x)$ for every $x \in X^{\epsilon_\nu}$. This shows 
$$\mathcal{F}_a(X,Y) \subset \bigcup_{\nu\in I} \mathcal{F}_a(X^{\epsilon_\nu}, Y^{\epsilon_\nu}) \cup Z,$$
where $Z: = \cup_{\nu \in \mathbb{Z}\setminus I} (\overline{X^{\epsilon_\nu}} \cap Y) \bigcup \cup_{\nu \in I} \bigg((\overline{X^{\epsilon_\nu}}\cap Y)\setminus Y^{\epsilon_\nu}\bigg)$ a subset of codimension greater than $1$ in $Y$.

Because a countable union of subsets of positive codimensions in $Y$ is a subset of positive codimension in $Y$, it remains to show that $\dim \mathcal{F}_a (X^{\epsilon_\nu}, Y^{\epsilon_\nu}) < \dim Y^{\epsilon_\nu}$ for $\nu \in I$. This follows from the inductive application of the above arguments for $(X^{\epsilon_\nu}, Y^{\epsilon_\nu})$. The induction stops when $\dim X^{\epsilon_\nu} \leq \dim Y$. 
\end{proof}

On order to prove Lemma 2.4 we will use the following Lemma 4.3 which plays the same role for $b$ as Lemma 4.2 does for $(a)$.

\begin{lemma}\label{lem_pb}
Let $X, Y$ be definable submanifolds of $\mathbb{R}^n$ such that $Y \subset \overline{X}\setminus X$ and let $y$ be a point in $Y$. There is a good sequence $\{x_n\}$ in $X$ converging to $y$ such that $p_{b}(x_n)$ converges to $0$.
\end{lemma}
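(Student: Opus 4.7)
The plan is to imitate the proof of Lemma~\ref{existence_sequence_lem}, enriching the cone construction so as to control $p_{b'}$ simultaneously with $p_a$. Since $p_b = p_a + p_{b'}$ is a sum of non-negative terms, it suffices to produce a single good sequence along which both tend to $0$. The diagonalization of Case~2 in the proof of Lemma~\ref{existence_sequence_lem} carries over verbatim, so I may restrict to essential points $y \in \mathrm{Int}_Y(\overline{X_i} \cap Y)$ for some top-dimensional stratum $X_i$ of an $\eta$-flat definable stratification of $X$ given by Lemma~\ref{lemma_flat}, where $\eta > 0$ will be chosen small in terms of the contradiction parameter $\epsilon$.

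Assume for contradiction that $\lim p_b(x_n) > \epsilon$ for every good sequence in $X$ converging to $y$. Lemma~\ref{existence_sequence_lem} applied inside $X_i$ yields a good sequence $\{x_n\} \subset X_i$ with $x_n \to y$, $p_a(x_n) \to 0$, Grassmannian limit $\tau := \lim T_{x_n}X \supset T_yY$, and direction limit $v := \lim p(x_n) \in N_yY$ a unit vector; the hypothesis then forces $\|\pi_{N_\tau}(v)\|^2 > \epsilon$. Pick a unit vector $w \in \tau \cap N_yY$ (non-empty since $\dim X > \dim Y$) and consider the definable cone
\[
C_y^w := \bigl\{ x \in \bb R^n \setminus Y : \|p(x) - w\| < \eta \bigr\}.
\]
The crucial \emph{subclaim} is that $y \in \overline{X_i \cap C_y^w}$. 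Once established, the curve selection lemma produces a $C^1$ curve $\gamma\colon (0,1)\to X_i \cap C_y^w$ tending to $y$, and along a good subsequence $\{\gamma(t_n)\}$ one has Grassmannian limit $\tau'$ with $\delta(\tau,\tau') \leq 2\eta$ by $\eta$-flatness, and direction limit $v'$ with $\|v' - w\| \leq \eta$. Applying the triangle inequality for $\delta$ gives $\delta(T_yY,\tau') \leq 2\eta$ and $\|v'-\pi_{\tau'}(v')\| \leq 4\eta$, hence $\lim p_a \leq 4k\eta^2$ and $\lim p_{b'} \leq 16 \eta^2$. Choosing $\eta$ so that $4(k+4)\eta^2 < \epsilon$ yields a good sequence with $\lim p_b < \epsilon$, contradicting the hypothesis.

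The principal obstacle is the subclaim $y \in \overline{X_i \cap C_y^w}$, which asks that $X_i$ contain points near $y$ whose normal direction $p(\cdot)$ is essentially prescribed to be $w$. The essential property places a full neighborhood of $y$ in $Y$ inside $\overline{X_i}$, and combined with $\tau \supset T_yY$ and $\eta$-flatness it shows that $\pi_Y\colon X_i \to Y$ is a submersion onto an open dense subset of this neighborhood, with fibers of dimension $\dim X - \dim Y$ and tangent spaces $\eta$-close to $\tau \cap N_yY$. For base points $z \in Y$ near $y$ in the image of $\pi_Y|_{X_i}$, one uses $w \in \tau \cap N_yY$ together with the fiber structure to slide along the fiber and locate a point of $X_i$ arbitrarily close to $z$ whose normal direction is within $\eta$ of $w$; letting $z \to y$ then supplies the required accumulation. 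Making this wing-like construction rigorous — in particular, arguing that each fiber $X_i \cap (z + N_yY)$ itself accumulates at the base point $z$ — is where the technical heart of the proof sits, and where an argument in the spirit of Loi's wing lemma~\cite{Loi1} is probably needed.
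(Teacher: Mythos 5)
Your reduction to a dense set of good base points and the final diagonalization match the paper's strategy, and your bookkeeping with $\eta$-flatness is sound as far as it goes. But the argument is not complete: everything hinges on the subclaim $y \in \overline{X_i \cap C_y^w}$, i.e.\ that $X_i$ accumulates at $y$ from a \emph{prescribed} normal direction $w \in \tau \cap N_yY$, and you explicitly leave this unproved, deferring to ``an argument in the spirit of Loi's wing lemma.'' That is precisely the technical content of the lemma, so as written there is a genuine gap. Worse, the prescribed-direction version is stronger than what you need and is not obviously true for an individual stratum $X_i$: even if the fiber $X_i \cap (z + N_yY)$ accumulates at $z$, a definable curve in that fiber approaches $z$ from \emph{some} direction in (approximately) $\tau \cap N_yY$, with no reason for that direction to be within $\eta$ of the particular $w$ you chose. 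Your sketch via ``$\pi_Y|_{X_i}$ is a submersion'' and ``sliding along the fiber'' asserts, rather than proves, both the accumulation of the fibers at their base points and the ability to steer toward $w$.

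The paper closes exactly this gap by a different and simpler device that makes prescribing a direction unnecessary. It sets $Z(y) := X \cap (Y^{\perp} + \{y\})$ and shows that the set $\Delta = \{y : y \in \overline{Z(y)}\}$ is dense in $Y$ by a purely definability-theoretic dimension argument: if the complement had full dimension, cell decomposition would give an open $U \subset Y$ and $c>0$ with $\omega(y) = d(y, Z(y)) > c$ on $U$, contradicting $U \subset \overline{X}\setminus X$ (any $x \in X$ close to $y$ lies in $Z(\pi_Y(x))$ with $d(x,\pi_Y(x))$ small). For $y \in \Delta$, the curve selection lemma gives a definable $C^1$ curve $\gamma$ in $Z(y)$ tending to $y$; since $\pi_Y = y$ along $\gamma$, one has $p(z) = (z-y)/\|z-y\|$, and for a definable curve the limit of secants lies in the limit of tangents, which is contained in the Grassmannian limit $\tau'$ of $X$ along $\gamma$. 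Thus $p_{b'} \to 0$ \emph{automatically}, with no cone and no choice of $w$. The $p_a$ part is handled not by re-running the flatness argument but by intersecting $\Delta$ with the complement of $\mathcal{F}_a(X,Y)$, which is dense by Lemma \ref{lem25}; at such points every good sequence already has $p_a \to 0$. If you want to keep your cone-based approach you must actually prove the accumulation subclaim (at least for some $w$, which then forces you to let $w$ be determined by the curve rather than chosen in advance); replacing the cone by the exact normal slice is the cleanest way to do that, and is what the paper does.
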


\begin{proof}
Suppose that there is an $\epsilon > 0$ such that $p_{b}(x_n)> \epsilon$ for every good sequence $\{x_n\} $ in $X$ converging to $y$. We will show a contradiction by giving a sequence $\{x_n\}$ in $X$ converging to $y$ such that $p_{b}(x_n)$ converges to a value less than $\epsilon$.

For $y \in Y$, we define $Z(y) : = X \cap (Y^\perp + \{y\})$ and
$$\omega(y): = \inf\{d(x, y): x \in Z(y)\},$$ where $d(x,y)$ is the usual distance from $x$ to $y$. Put $\omega(y)=  1$ if $Z(y)  = \emptyset$. Clearly $\omega(y)$ is a definable function on $Y$. 

We claim that the set $\Delta: = \{y \in Y: y \in \overline{Z(y)}\}$ is open and dense in $Y$. In other words, its complement $\Delta^c: = \{y\in Y: \omega(y) > 0\}$ is of dimension less than the dimension of $Y$. Thus, suppose to the contrary that $\dim \Delta^c = \dim Y$. By the cell decomposition theorem and local compactness of $Y$, there is an open set $U$ in $Y$ and a constant $c > 0$ such that $\omega(y) > c$ for every $y \in U$. This means $U \not\subset \overline{X}\setminus X$, a contradiction.

Denote by $Y'$ the set of points in $\Delta$ which are not ($a$)-faults. Take $y \in Y'$. The curve selection lemma says that there exists a $C^1$ definable curve $\gamma: (0,1) \to Z(y)$ such that $\lim_{t \to 0} \gamma(t) = y$. Choose $\{z_m\}$ a good sequence in the curve $\gamma$ converging to $y$ and denote by $\tau$ its Grassmannian limit. From the construction we have  $\pi_Y(z_m) = y$, hence $p(z_m)= \dfrac{z_m - y}{\|z_m - y\|}$. Obviously $\lim_{m \to \infty} p(z_m) \in \lim_{m \to \infty} T_{z_m} \gamma \subset \tau$. This implies that $p_{b'}(z_m) = 0$. Moreover, since $y'$ is not an $(a)$-fault, $p_b(z_m) = 0$.

Since $Y'$ is dense in $Y$, for $y \in Y$ there is a sequence $\{y_n\} \subset Y'$ converging to $y$. Let $\{\gamma_n\}$ be the corresponding sequence of curves as above. Choose a sequence ${x_n}$ converging to $y$ with $x_n \in \gamma_n$ and $p_{b}(x_n) < \epsilon$, then $\lim_{n\to \infty} p_b(x_n)$ is obviously less than $\epsilon$. This gives a contradiction.
\end{proof}

Lemma \ref{lem_pb} together with the arguments of the proof of Lemma \ref{lem25} provide a proof for Lemma \ref{lem26}. 

\bibliographystyle{plain}
\bibliography{mainbibliography}

\end{document}